\newtheorem{lemma}{Lemma}[section]
\newtheorem{proposition}{Proposition}[section]
\newtheorem{thm}{Theorem}[section]
\newtheorem{definition}{Definition}[section]
\newtheorem{example}{Example}[section]
\newcommand\e{\varepsilon}
\renewcommand\d{\delta}
\newcommand\R{\mathbb{R}}
\newcommand\N{\mathbb{N}}
\renewcommand\P{\mathbb{P}}
\newcommand\E{\mathbb{E}}
\newcommand{\lfdr}{\textnormal{lfdr}}
\newcommand{\iid}{\textnormal{iid}}
\newcommand{\de}{\textnormal{d}}
\newcommand{\q}{d}
\def\Real{\mathbb{R}}
\def\ascf#1#2{{#1}^{\uparrow {#2}}}
\def\dascf#1#2{{#1}^{\uparrow\!\uparrow {#2}}}
\title{Sparse-limit approximation for $t$-statistics}
\author[1]{Mic{\' o}l Tresoldi}
\author[2]{Daniel Xiang}
\author[2]{Peter McCullagh}
\affil[1]{The Dow Chemical Company} 
\affil[2]{University of Chicago}
\begin{document}
\maketitle

\begin{abstract}

In a range of genomic applications, it is of interest to quantify the evidence that the signal at site~$i$ is active given conditionally independent replicate observations summarized by the sample mean and variance $(\bar Y, s^2)$ at each site.
We study the version of the problem in which the signal distribution is sparse, and the error distribution has an unknown site-specific variance so that the null distribution of the standardized statistic is Student-$t$ rather than Gaussian.  The main contribution of this paper is a sparse-mixture approximation to the non-null density of the $t$-ratio.  This formula demonstrates the effect of low degrees of freedom on the Bayes factor, or the conditional probability that the site is active.
We illustrate some differences on a HIV dataset for gene-expression data previously analyzed by \cite{efron2012large}.

%This clarifies the difference from an analysis that first reduces the data to a list of $p$-values and assumes the convolution takes place in the z-score space induced by these p-values. For small degrees of freedom ($\leq 10$), the conditional odds that a signal is active can be inflated by a factor as large as 1.25 in the region of interest (e.g. $3 \leq |y| \leq 10$ for a real-valued observation $y$), demonstrating that it is not without loss of generality to ignore the degree of freedom parameter when calculating local false discovery rates. We illustrate some differences on an HIV dataset for gene-expression data analyzed by \cite{efron2011locfdr}.

\end{abstract}

\section{Introduction}\label{sec-intro}

We consider the sparse signal plus replicated Gaussian noise model on the real line, with $m \ge 2$ conditionally independent replicate observations at each site.
By sufficiency, the observations at one site may be reduced to the sample mean and variance in the implied signal-plus-noise model
\begin{align}
\label{normal-mean-model}
\bar Y = X + \bar\e, \qquad (m-1)s^2 \sim \sigma^2 \chi_{m-1}^2.
\end{align}
The signal $X$ is assumed to be independent of the error, which is distributed as $N(0, \sigma^2/m)$ with variance inversely proportional to the sample size.  Since the model has an unknown site-specific variance parameter, it is natural first to reduce the information at each site to the standardized ratio 
$T_i = m^{1/2}\bar Y_i/s_i$, where $s_i^2$ is the sample variance.  The null distribution is Student~$t$ on $m-1$ degrees of freedom, independent for each site, and independent of~$\sigma^2$. The evidence for signal activity at site~$i$ is then based on the observed value $T_i$ in relation to the null distribution and to the values at all other sites.

This model for $(\bar{Y},s^2)$ typically arises where there is a need to summarize data with parallel structure for the sites, e.g.~in high-throughput biology settings such as microarrays (\cite{dudoit2003multiple}, \cite{efron2008microarrays}, \cite{ignatiadis2023empirical}). A common assumption in the signal detection literature is to suppose that $X$ is distributed according to a sparse `atom and slab' mixture distribution \citep{mitchell1988bayesian} $X \sim \pi_0\d_0 + (1-\pi_0) G_1$ with null probability $\pi_0 \approx 1$, and non-null component $G_1$. In this note, we work with a more general concept of sparsity \citep{mccullagh2018statistical} and derive an approximation to the marginal density of the $t$-ratio when the distribution of $X$ is sparse. To this end, up to a re-scaling of $X$, it suffices to consider a special case of (\ref{normal-mean-model}) with $\sigma=1$. 

In Section \ref{sec-sparsity-t-ratio} we review the definition of statistical sparsity and derive a mixture representation for the distribution of the $t$-ratio when the signal is sparse. In Section \ref{sec-numerical} we compute conditional probabilities using this approximation and make a comparison with the probability integral transform on a real dataset of gene expression levels from HIV patients. Section \ref{sec-discussion} concludes with a brief discussion, and Appendix \ref{sec-proofs} contains proofs of our results.

\section{Statistical sparsity and $t$-ratios}
\label{sec-sparsity-t-ratio}
\subsection{Sparse limit}
The definition of statistical sparsity proposed by \cite{mccullagh2018statistical} aims to organize the family of signal distributions by the resulting signal-plus-standard Gaussian model $X+\varepsilon$, where $\varepsilon \sim N(0,1)$ is independent of $X$. In other words, two sequences $(P_\nu)$ and $(Q_\mu)$ tend to the same sparse limit if and only if there is a 1--1 matching $\nu\mapsto\mu(\nu)$ such that the convolutions $P_\nu * N(0,1)$ and $Q_{\mu(\nu)} * N(0,1)$ are hard to distinguish as $\nu \to 0$. This criterion is formalized in terms of weak convergence of the signal distribution over a suitably large class of integrands, the constraints of which are related to particularities of the Gaussian convolutional model. % This formulation yields an exact formula that quantifies the effect of signal sparsity  on the resulting distribution of the $t$-ratio $m^{1/2}\bar{Y}/s$. % when the distribution of $X$ satisfies this notion of statistical sparsity.
%{\color{red}one sentence to motivate the definition, and another to connect it to our results}
\begin{definition}[\cite{mccullagh2018statistical}.]
A family of distributions $(P_\nu)$ indexed by $\nu > 0$ has a sparse limit with rate $\rho_\nu$ and exceedance measure $H\neq 0$ if
\begin{align}
\label{sparsity-def}
\lim_{\nu \to 0} \rho_\nu^{-1}\int_\Real w(x)\,P_\nu(\de x) =  \int_{\R} w(x)\, H(\de x) < \infty
\end{align}
for every function $w: \R \to \R$ that is bounded, continuous and $O(x^2)$ at the origin. 
\end{definition}

For every $\lambda > 0$, the pair $(\lambda \rho_\nu, \lambda^{-1} H)$ is equivalent to $(\rho_\nu, H)$ in \eqref{sparsity-def}. To eliminate this indeterminacy, a non-zero measure $H$ may be replaced with a unit measure such that
\begin{align}
\label{normalize-exceedance}
        \int_{\R} (1-e^{-x^2\!/2})\, H(\de x) = 1.
\end{align}
This particular normalization arises naturally in the signal-plus-Gaussian noise convolution.
If the family is sparse with unit exceedance measure~$H$, then the integral
\begin{equation}\label{sparsity-integral}
\int_{\Real} (1 - e^{-x^2\!/2}) \,P_\nu(\de x) = \rho_\nu + o(\rho)
\end{equation}
determines the rate parameter to first order in~$\rho\equiv \rho_\nu$. Here, $o(\rho)$ means a term $b(\rho)$ for which $\frac{b(\rho)}{\rho}\to 0$ as $\rho \to 0$.

Every distribution has its own sparsity parameter or sparsity rate as defined by~\eqref{sparsity-integral}.  By contrast, the exceedance measure is a characteristic of certain families. Thus, two sparse families $(P_\nu), (P'_\mu)$ may be put in 1--1 correspondence by their sparsity rates $\rho, \rho'$ such that $\rho(\nu) = \rho'(\mu)$.
In the sense of $w$-integrals occurring in \eqref{sparsity-def}, such matched pairs $(P_\nu, Q_\mu)$ are indistinguishable to first order in the sparse limit if the two families have the same exceedance measure.
First-order equivalence of the convolution implies an equivalence relationship on sparse families.
\begin{definition}
    Two sparse families are first-order equivalent if they have the same exceedance measure.
\end{definition}

\subsection{Scale family}\label{sec:scale}
Since this paper is concerned with the effect of unknown or estimated scale parameters, it is natural to require both the family of signal distributions and the family of error distributions to be closed under scalar multiplication.
\begin{definition}
A family of distributions $P_\nu$ indexed by $\nu > 0$ is closed under scalar multiplication if, to each pair $\nu, \sigma > 0$ there corresponds a number $\nu' > 0$ such that $X \sim P_\nu$ implies $\sigma X \sim P_{\nu'}$.  A closed family is called a scale family if the group action is transitive with a single orbit, i.e.,~to each $\nu, \nu'$ there corresponds a number $\sigma > 0$ such that $X\sim P_\nu$ implies $\sigma X \sim P_{\nu'}$.  By convention, the scale family is indexed by the scale parameter~$\sigma$.  The family has a sparse limit if \eqref{sparsity-def} is satisfied in the small-scale limit $\sigma\to 0$.
\end{definition}

A family that is closed may contain several orbits.
The Student~$t$ scale family on $\q$ degrees of freedom has one orbit for each~$\q>0$. The sparsity rate and exceedance measure for the Student $t$ scale family are stated below, shown to satisfy \eqref{sparsity-def} in Section \ref{sec-proofs}.

\begin{example}[\cite{mccullagh2018statistical}]
\label{ex:student-t}
Let $0 < \q < 2$ be given.
The Student~$t$ family on $\q$ degrees of freedom with scale parameter $\sigma>0$ has a sparse limit as $\sigma \to 0$ with rate
    \begin{align*}
        %\rho = \frac{(2/\q)^{1-\q/2}\Gamma((\q+1)/2)\Gamma(2/\q-1)}{\sqrt{\pi}\Gamma(\q/2)}  \cdot \sigma^\q,
        \rho = \frac{\q^{\q/2}\Gamma\left(\frac{d+1}{2} \right)}{C_\q\sqrt{\pi}\Gamma(d/2)} \cdot \sigma^\q.
    \end{align*}
The inverse-power exceedance measure
\label{lem-inverse-power-exceedance}
        \begin{align*}
        H_\q(\de x) &\coloneqq C_\q \cdot  \frac{\de x}{|x|^{\q+1}}, \qquad C_\q \coloneqq \frac{\q\, 2^{\q/2-1}}{\Gamma(1-\q/2)}
    \end{align*}
satisfies (\ref{normalize-exceedance}).  
In particular, for $\q=1$, $P_\sigma = C(\sigma)$ is the Cauchy distribution with probable error~$\sigma$;  the sparsity rate is $\rho = \sigma \sqrt{2/\pi}$, and the normalized exceedance measure is $\de x/(x^2 \sqrt{2\pi})$.
\end{example}

%{\color{red}an extra sentence distinguishing from the other t-ratios (restriction to $0<d<2$ is necessary here but not for the t-ratio obtained by dividing by $s$).}

\begin{example}
    The zero-mean Gaussian family $P_\nu = N(0, \nu^2)$ is a scale family.  However, there does not exist a measure $H$ satisfying \eqref{sparsity-def}, so this family does not have a sparse limit.
    For the same reason, neither the scaled student~$t$ family on $d \ge 2$ degrees of freedom nor the Laplace scale family with density $\sigma^{-1} e^{-|x|/\sigma}/2$
    has a sparse limit as $\sigma\to 0$. 
\end{example}
\begin{example}
\label{ex:spike-and-slab}
    The atom-and-slab Cauchy mixture
    \[
    P_\sigma = 0.8 \delta_0 + 0.2 C(\sigma)
    \]
    is a symmetric scale family, as is the atom-free spike-and-slab mixture
    \[
    P_\sigma = 0.8 N(0, \sigma^2) + 0.2 C(\sigma).
    \]
    In both cases, \eqref{sparsity-def} is satisfied as $\nu=\sigma \to 0$ with rate $0.2\sigma\sqrt{2/\pi}$ and inverse-square exceedance density $dx/(x^2\sqrt{2\pi})$.
    Both mixtures are first-order equivalent to the sparse Cauchy model $C(\sigma)$ and to the horseshoe model \citep{carvalho2010horseshoe} with density
    $\sigma^{-1}\log(1 + \sigma^2 x^{-2})/(2\pi)$.
  %\bibitem[Carvalho et~al.(2010)]{carvalho:2010}
  %Carvalho, C.M., Polson, N.G. and Scott, J.G. (2010)
  %The horseshoe estimator for sparse signals.
  %Biometrika \textbf97, 465--480.
\end{example}

The occurrence of the Student~$t$ family indexed by $0 < d < 2$ and $\sigma > 0$ as an instance of a sparse scale family is unrelated to the principal topic of this paper, which is the distributional effect of internal normalization by the sample standard deviation.   In that case, the null distribution of $T = \bar Y \sqrt{m} / s$ is Student~$t$ on $m-1$ degrees of freedom, where the sample size $m\ge2$ is arbitrarily large and $\sigma = 1$.  The non-null distribution is derived in section~\ref{t-ratio-distn}.

The main purpose of this section is to characterize the totality of sparse scale families by the set of equivalence classes. 
Surprisingly, the Student~$t$ family in Example~\ref{ex:student-t} covers the entire range. 
In other words, every sparse scale family has an inverse-power exceedance measure with some index $0 < d < 2$.  It follows that each equivalence class has a Student~$t_d$ representative.  A precise statement is given below and a short proof is provided in Section~\ref{sec-proofs}.

\begin{thm}
\label{thm-sparse-scale}
    Every symmetric sparse scale family has a rate function $\rho(\sigma) = \sigma^\q L(\sigma^{-1})$ and an inverse-power exceedance measure $C_d \,\de x / |x|^{\q+1}$ for some $0 < \q < 2$ and slowly varying function $L:(0,\infty) \to (0,\infty)$.
\end{thm}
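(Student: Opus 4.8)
The plan is to exploit the scale-family structure to force the exceedance measure $H$ to be homogeneous, and then to show that a homogeneous measure admissible in \eqref{sparsity-def} must be an inverse power. Fix the base law $P_1$ and write $P_\sigma$ for the law of $\sigma Z$ with $Z\sim P_1$; symmetry means $Z$ and $-Z$ have the same distribution. For any $\lambda>0$ and any admissible integrand $w$ (bounded, continuous, $O(x^2)$ at the origin), the rescaled function $w_\lambda(\cdot):=w(\lambda\,\cdot)$ is again admissible, and a change of variable gives the key identity
\[
\int_\R w(x)\,P_{\lambda\sigma}(\de x) = \int_\R w_\lambda(y)\,P_\sigma(\de y),
\]
since both sides equal $\E[w(\lambda\sigma Z)]$.

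First I would apply \eqref{sparsity-def} to each side of this identity. Dividing the left side by $\rho(\lambda\sigma)$ and the right side by $\rho(\sigma)$ and letting $\sigma\to 0$ shows, for every admissible $w$ with $\int_\R w\,\de H\neq 0$, that the ratio $\rho(\lambda\sigma)/\rho(\sigma)$ converges to a limit $k(\lambda)$ which is \emph{independent of} $w$ (the left-hand ratio does not involve $w$) and which satisfies the homogeneity relation
\[
\int_\R w(\lambda y)\,H(\de y) = k(\lambda)\int_\R w(y)\,H(\de y).
\]
The normalizer $w_0(x)=1-e^{-x^2/2}$ is admissible with $\int_\R w_0\,\de H=1$ by \eqref{normalize-exceedance}, so such $w$ exist and $k$ is well defined and positive. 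Iterating the identity gives $k(\lambda_1\lambda_2)=k(\lambda_1)k(\lambda_2)$, and dominated convergence (using $w_0(\lambda y)\le C\,w_0(y)$ for $\lambda$ in a compact set, with $w_0$ being $H$-integrable) makes $k$ continuous; hence $k(\lambda)=\lambda^{d}$ for some $d\in\R$.

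Next I would solve the homogeneity relation for $H$. On $(0,\infty)$, pass to logarithmic coordinates $x=e^{t}$ and let $\nu$ be the image of $H|_{(0,\infty)}$; the scaling $y\mapsto\lambda y$ becomes translation by $s=\log\lambda$, so the relation reads $\nu(B-s)=e^{ds}\nu(B)$ for every Borel set $B$. The tilted measure $e^{dt}\nu(\de t)$ is then translation invariant and locally finite, hence a multiple of Lebesgue measure by uniqueness of Haar measure; unwinding the change of variables gives $H(\de x)=c\,|x|^{-(d+1)}\,\de x$ on $(0,\infty)$, and symmetry extends this to all of $\R\setminus\{0\}$. Finiteness in \eqref{sparsity-def} then pins down the range: $\int_\R(1-e^{-x^2/2})|x|^{-(d+1)}\,\de x$ is finite and nonzero precisely when $0<d<2$ (integrability at the origin requires $d<2$, at infinity requires $d>0$), and the normalization \eqref{normalize-exceedance} fixes $c=C_d$.

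Finally, for the rate, the relation $\rho(\lambda\sigma)/\rho(\sigma)\to\lambda^{d}$ as $\sigma\to 0$ says exactly that $\rho$ is regularly varying at $0$ with index $d$; equivalently $\sigma\mapsto\rho(1/\sigma)$ is regularly varying at $\infty$ with index $-d$. By the Karamata representation this yields $\rho(\sigma)=\sigma^{d}L(\sigma^{-1})$ with $L$ slowly varying, as claimed. I expect the main obstacle to be the middle step: extracting the homogeneity functional equation cleanly from the two-fold use of \eqref{sparsity-def} (establishing existence of the limit $k(\lambda)$ and its independence of $w$), and then solving it rigorously given that $H$ is only ever tested against integrands that vanish quadratically at the origin — this restriction is exactly what forces both the inverse-power form and the interval $0<d<2$.
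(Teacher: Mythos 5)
Your proof is correct, and it takes a genuinely different route from the paper's. The paper's argument is density-based: it assumes $P_\sigma(\de x)=p_\sigma(x)\,\de x$ and $H(\de x)=h(x)\,\de x$, upgrades the defining limit \eqref{sparsity-def} to the pointwise asymptotic $p_\sigma(x)=\sigma^{-1}p_1(x/\sigma)\sim\rho(\sigma)h(x)$, and from the scale structure deduces $\rho(\sigma/x)/\rho(\sigma)\to x h(x)/h(1)$; a rigidity lemma for such limits (Lemma 1, Chapter 8 of Feller) then forces $h(x)=h(1)x^{-d-1}$, after which the slowly varying factor is obtained exactly as in your last step, by defining it as the ratio and checking slow variation directly. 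You instead stay entirely at the level of the measure-theoretic definition: the change-of-variables identity yields simultaneously the ratio limit $\rho(\lambda\sigma)/\rho(\sigma)\to k(\lambda)$ and the homogeneity functional equation for $H$ tested against admissible $w$; multiplicativity plus continuity give $k(\lambda)=\lambda^{d}$; and Haar uniqueness in logarithmic coordinates solves for $H$. This buys two things the paper's proof lacks: (i) you never assume that $P_\sigma$ or $H$ has a density, nor that the weak convergence in \eqref{sparsity-def} upgrades to pointwise density convergence --- a step the paper takes without justification; and (ii) you pin down both endpoints of $0<d<2$ from finiteness and non-vanishing of the normalization \eqref{normalize-exceedance}, whereas the paper only remarks on $d<2$. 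What the paper's route buys in exchange is brevity, by outsourcing the functional-equation analysis to a classical lemma. Two small polish points for yours: the homogeneity relation for admissible $w$ with $\int w\,\de H=0$ needs the one-line remark that $w+w_0$ has $H$-integral one, so the relation transfers by linearity; and at the end you do not need Karamata's representation theorem (which would require measurability of $L$) --- defining $L(u):=\rho(1/u)\,u^{d}$ and checking $L(au)/L(u)\to 1$ directly from the ratio limit is enough, and is what the paper itself does.
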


The claim that $\rho(\sigma) = \sigma^\q$ for scale families, which occurs
at the end of section~2.2 in \cite{mccullagh2018statistical}, is incorrect, as the following example demonstrates.
\begin{example}
    The family of unimodal symmetric distributions
    \[
    P_\sigma(\de x) = \frac{\sigma \log(1+x^2/\sigma^2)\, \de x} {2\pi\, x^2}
    \]
    is a scale family whose tails are slightly heavier than Cauchy.
    For $K \simeq 4.3552$, similar remarks apply to the log-weighted Cauchy family
    \[
    Q_\sigma(\de x) = \frac{\sigma \log(1+x^2/\sigma^2)\, \de x} {K\,(\sigma^2+x^2)},
    \]
    except that $Q_\sigma$ has modes at $\pm\sigma\sqrt{e - 1}$, with zero density at the origin.
    Nevertheless, both families satisfy the limit condition \eqref{sparsity-def} in the form
    \[
    \lim_{\sigma\to 0}\frac{P_\sigma(\de x)}{-\sigma\log\sigma} = \frac{\de x}{\pi x^2},
    \qquad
    \lim_{\sigma\to 0}\frac{Q_\sigma(\de x)}{-\sigma\log\sigma} = \frac{2\, \de x}{K x^2},
    \]
    with $L(\sigma^{-1}) \propto -\log \sigma$ in Theorem~\ref{thm-sparse-scale}.
    Accordingly, both are sparse with rate $\rho\propto-\sigma\log\sigma$, and inverse-square exceedance density.  Despite the difference in rate functions,
    they belong to the same equivalence class (Student~$t_1$) as all families in Example~\ref{ex:spike-and-slab}.
\end{example}

\subsection{Signal plus noise convolution}
Let $X$ be a random signal with distribution $P_\nu$ taken from a family of symmetric distributions having a sparse limit with rate $\rho$ and unit exceedance measure~$H$.
The observation $Y$ is generated from the signal-plus-standard Gaussian model,
\begin{align*}
Y = X + \e, \qquad \e \sim N(0,1). 
\end{align*}
Provided that $X, \varepsilon$ are independent, the marginal density of $Y$ is
\begin{align}
\nonumber
\int \phi(y - x)\, P_\nu(\de x) 
	&= \phi(y) \int e^{-x^2/2} P_\nu(\de x) + \phi(y) \int (e^{xy} - 1) e^{-x^2/2}\, P_\nu(\de x), \\
 \nonumber
	&= \phi(y) \bigl(1 - \rho + o(\rho)\bigr) +  \phi(y) \int \bigl(\cosh(xy) - 1\bigr) e^{-x^2/2}\, P_\nu(\de x), \\
 \nonumber
	&= \phi(y)(1 - \rho) +  \rho \phi(y) \int \bigl(\cosh(xy) - 1\bigr) e^{-x^2/2}\,  H(\de x) + o(\rho), \\
 \label{marginal-approx-z}
	&= (1 - \rho)\phi(y) +  \rho \phi(y) \zeta(y) + o(\rho).
\end{align}
In the second line, $\int e^{-x^2\!/2} P_\nu(dx) = 1 - \rho + o(\rho)$ follows from the definition of the sparsity rate, while symmetry of $P_\nu$ permits us to replace the exponential $e^{xy}$ with the cosh function.
Since the integrand $(\cosh(xy) - 1) e^{-x^2\!/2}$ is bounded, continuous and $O(x^2)$ at the origin, sparsity allows us replace the $P_\nu$-integral on line~2 with the $H$-integral in line~3.
The zeta function on line~4 is determined by the exceedance measure,
\begin{align*}
    \zeta(y) \coloneqq \int_{\R} (\cosh(xy)-1) e^{-x^2/2} H (\de x).
\end{align*}
Note that since $\int \phi(y) \zeta(y) \,\de y = 1$ (see, e.g.~section 3.2 of \cite{mccullagh2018statistical}), up to first order in the sparsity rate, $Y$ is distributed according to a two-component mixture, $\phi$ and $\psi = \phi \cdot \zeta$, with corresponding weights $1-\rho$ and $\rho$, where $\zeta$ is the density ratio of the non-null and null components. % To first order in sparsity, two families of signal distributions having the same exceedance measure give rise to the same marginal distribution.

\subsection{Convolution for averages}
Consider the same set-up as in the previous section where the sample mean observation $\bar Y = X + \bar \varepsilon$ is the sum of a signal and an independent random error distributed as $N(0, 1/m)$.  For reasons given in section~\ref{sec:scale}, the family of signal distributions is assumed to be a sparse scale family with inverse-power exceedance density $H(dx) = C_\q \, dx  / |x|^{\q + 1}$.

By the same argument given in the preceding section, the marginal density of $\bar Y$ at $y$ is
\begin{align}
\nonumber
\int \phi_m(y - x)\, P_\nu(\de x) 
	&= \phi_m(y) \int e^{-mx^2/2} P_\nu(\de x) + \phi_m(y) \int (e^{mxy} - 1) e^{-mx^2\!/2}\, P_\nu(\de x), \\
 \nonumber
	&= \phi_m(y) \bigl(1 - \rho_m + o(\rho)\bigr) +  \phi_m(y) \int \bigl(\cosh(mxy) - 1\bigr) e^{-mx^2/2}\, P_\nu(\de x), \\
 \nonumber
	&= \phi_m(y)(1 - \rho_m) +  \rho_1 \phi_m(y) \int \bigl(\cosh(mxy) - 1\bigr) e^{-mx^2/2}\,  H(\de x) + o(\rho), \\
 \label{marginal-approx-zbar}
	&= (1 - \rho_m)\phi_m(y) +  \rho_m \phi_m(y) \zeta( m^{1/2}y) + o(\rho).
\end{align}
Here, $\phi_m(y) = m^{1/2} \phi(m^{1/2} y)$ is the Gaussian density with variance $m^{-1}$, while the non-Gaussian mixture fraction is
\begin{eqnarray*}
\rho_m &=& \int \bigl(1 - e^{-mx^2/2}) P_\nu(dx), \\
   &=& \rho \int \bigl(1 - e^{-mx^2/2}) C_d \, dx / |x|^{\q+1}
   = \rho\, m^{\q/2}
\end{eqnarray*}
to first order for the inverse-power measure.
Thus, the standardized statistic $m^{1/2} \bar Y$ is distributed according to the binary mixture
\begin{equation}\label{marginal-approx-z2}
(1-\rho_m)\phi(y) + \rho_m \phi(y) \zeta(y) + o(\rho)
\end{equation}
with weight $\rho_m$ on the non-Gaussian component.

The zeta function for the inverse-power exceedance measure is
expressible as a power series in $y^2$ with strictly positive coefficients
\begin{equation}\label{inverse-power-zeta}
\int_{\R}(\cosh(xy)-1) e^{-x^2/2}\, \frac{C_{\q}\,\de x} {|x|^{\q+1}}
 = -\sum_{r=1}^\infty \frac{\dascf{(-\q)} r y^{2r}} {(2r)!}
 = -\sum_{r=1}^\infty \frac{\dascf{(-\q)} r y^{2r}} {\dascf 1 r \,2^r r!}
\end{equation}
\citep{mccullagh2018statistical}.
For integer $r \ge 0$, the rising factorial function is the product $\ascf \alpha r = \alpha(\alpha+1)\cdots(\alpha+r-1)$, so that $\ascf 1 r = r!$, while $\dascf \alpha r = 2^r \ascf{(\alpha/2)} r$ is the double-step rising factorial $\dascf \alpha r = \alpha(\alpha+2)\cdots (\alpha + 2(r-1))$.  It follows that
$- \dascf{(-\q)} r$ is positive for $0 < \q < 2$.

\subsection{Mixture coefficients}
\label{sec-mixture-coefficients}
The function $h_\alpha(t) = 1 - (1 - t)^\alpha$ has a series expansion
\[
h_\alpha(t) = - \sum_{r=1}^\infty \frac{\ascf{(-\alpha)} r \,t^r} {r!}.
\]
For $0 < \alpha < 1$, the coefficients $\zeta_r = -\ascf{(-\alpha)} r / \ascf 1 r$ are strictly positive, the tail behaviour is $\zeta_r = O(\Gamma(r-\alpha) / \Gamma(r+1) = O(r^{-\alpha - 1})$ for large~$r$, and the series is convergent for $|t|\le 1$.
Since $h_\alpha(1) = 1$, the coefficients determine a probability distribution on the natural numbers.  The tail behaviour $\zeta_r = O(r^{-\alpha-1})$ implies that the moments are not finite.
The upper limit $h_1(t) = t$ is an exception implying unit mass at $x=1$. This distribution with probability generating function $h_\alpha$ arises in the following section as the coefficients in a countable mixture with inverse-power index $d = 2\alpha$. The probability distribution defined by the coefficients of $h_\alpha$ is further described in section \ref{sec-proofs}.

%{\color{blue}  This section is tangential to the rest of the paper, but perhaps worth a mention.  Let me know what you think.}
%
%{\color{red} I think it is worth a mention. The reviewer called the expressions for $\zeta_{d,r}$ and $f_r(t)$ ``more complicated than one might hope", and this subsection makes it clear how the coefficients $\zeta_{d,r}$ are generated from the simple expression $h_{d/2}(t)=1-(1-t)^{d/2}$.}

\subsection{Distribution of the $t$-ratio}\label{t-ratio-distn}
In this section, we derive the distribution of the internally standardized statistic $m^{1/2}\bar{Y}/s$ where $s$ is the sample standard deviation and $\bar{Y}$ is the sample mean among $m$ i.i.d.\ observations.
%{\color{cyan}{Do we need a short sentence of introduction here too? Perhaps to help make the distinction between the t-ratio (with $k$ dof) and the inverse measure (with index $d$).}} 
Let $Y$ denote a random variable with the distribution of $m^{1/2}\bar{Y}$, the two-component mixture \eqref{marginal-approx-z2}, and let $s^2 \sim \chi_{k}^2/k$ be a scalar mean square independent of~$Y$. The sample ratio $T \coloneqq Y /s$ also has a binary mixture representation with the same weights, where the null component is the $t$-distribution on $k=m-1$ degrees of freedom, denoted by~$f_0$, and the non-null component is determined by the zeta function relative to $f_0$. When the signal distribution has an inverse-power exceedance measure, the Student-$t$ zeta function has an explicit analytic form, stated below and proven in Section \ref{sec-proofs}.

\begin{thm}[\cite{tresoldi2021sparsity}]
\label{thm-sparse-t-marginal}
Let $0\le\rho\le 1$ be given, and suppose that $Y$ is distributed as the mixture
\begin{eqnarray*}
    Y &\sim& (1-\rho)\phi+\rho \psi, \qquad \psi(y) \coloneqq \phi(y) \zeta(y),
\end{eqnarray*} 
where $\zeta(\cdot)$ is the zeta function~\eqref{inverse-power-zeta} for the inverse-power measure $H_\q$ with index $\q \in (0,2)$, and $Y$ is independent of $s^2 \sim \chi_k^2/k$. Then the ratio $T = Y /s$ is marginally distributed according to the countably infinite mixture,
\begin{equation}
\label{marginal-approx-t}
(1-\rho) f_0(t) + \rho \sum_{r=1}^\infty \zeta_{d,r} f_r(t) % \sum_{r=1}^\infty \zeta_{d,r} \frac{f_r(t)}{f_0(t)} %\zeta(t), %\sum_{r=1}^\infty \zeta_{\alpha,r} \frac{y^{2r}} {\dascf 1 r} \frac{\dascf {(1+k)} r} {(k + y^2)^r}.
\end{equation}
where the coefficients $\zeta_{\q,r} = -\dascf{(-\q)} r / (2^r r!)$ are non-negative and add to one, and the densities $f_r$ are given by
\begin{align}
\label{zeta_k}
    %\zeta(t) = \sum_{r=1}^\infty \zeta_{\q,r} \frac{t^{2r}} {\dascf 1 r} \frac{\dascf {(1+k)} r} {(k + t^2)^r}, \hspace{1em}  = \frac{\q(2-\q)\cdots (2r-2-\q)} {2 \cdot 4 \cdots (2r)}
    %\zeta_{\q,r} = -\frac{\dascf {(-\q)} {r}} {2^r r!}, \hspace{1em} 
    f_r(t) = \frac{t^{2r}} {( 1 + t^2/k)^{r+1/2+k/2}} \times \frac{\Gamma(1/2)} {k^{r + 1/2} \pi^{1/2} B(r+1/2, k/2)},
\end{align}
where $B(a,b)$ is the Beta function.
\end{thm}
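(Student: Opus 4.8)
The plan is to compute the density of $T = Y/s$ directly from the standard ratio formula for independent variables and then match constants. Writing $h_s$ for the density of $s = (\chi_k^2/k)^{1/2}$ and $g = (1-\rho)\phi + \rho\psi$ for the density of $Y$, the ratio density is $f_T(t) = \int_0^\infty g(ts)\,s\,h_s(s)\,\de s$. By linearity this splits into a null part and a non-null part carrying weights $1-\rho$ and $\rho$. The null part $\int_0^\infty \phi(ts)\,s\,h_s(s)\,\de s$ is, by construction, the density of the ratio of a standard Gaussian to $(\chi_k^2/k)^{1/2}$, i.e.\ the Student-$t$ density $f_0$ on $k$ degrees of freedom; this disposes of the first term $(1-\rho)f_0$ immediately, and one checks in passing that setting $r=0$ in the displayed formula for $f_r$ returns exactly this $t_k$ density.

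For the non-null part I would substitute the power series \eqref{inverse-power-zeta} for $\zeta$, so that $\psi(ts) = \phi(ts)\sum_{r\ge1}\bigl(-\dascf{(-\q)}{r}/(2r)!\bigr)(ts)^{2r}$, and interchange summation with the $s$-integration. Because $0<\q<2$ forces $-\dascf{(-\q)}{r}>0$, every term in the resulting series is nonnegative, so Tonelli's theorem justifies the interchange with no further estimates; this positivity is the one place where the hypothesis $\q\in(0,2)$ is essential, and I expect it to be the only genuine subtlety in the argument. The $r$-th summand then reduces to the elementary Gaussian--gamma integral $\int_0^\infty s^{2r+k}e^{-(t^2+k)s^2/2}\,\de s$, which I would evaluate by the substitution $u=s^2$ as a multiple of $\Gamma\!\bigl(r+(k+1)/2\bigr)\,(t^2+k)^{-(r+(k+1)/2)}$.

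The remaining work is bookkeeping of constants. Collecting the normalizing factor of $h_s$, the coefficient $1/(2r)!$, and the gamma integral, and rewriting $(2r)! = 2^r r!\,(2r-1)!!$ together with the half-integer identity $\Gamma(r+\tfrac12)=(2r-1)!!\,\Gamma(\tfrac12)/2^{r}$, I expect the $r$-th term to collapse to $\zeta_{\q,r}\,f_r(t)$; the conversion between $(t^2+k)^{-(r+(k+1)/2)}$ and $(1+t^2/k)^{-(r+1/2+k/2)}$ and the appearance of $B(r+1/2,k/2)=\Gamma(r+1/2)\Gamma(k/2)/\Gamma\!\bigl(r+(k+1)/2\bigr)$ account for the stated form. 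Finally, that each $f_r$ is a probability density follows from the Beta integral $\int_0^\infty u^{r-1/2}(1+u)^{-(r+1/2+k/2)}\,\de u = B(r+1/2,k/2)$ after the substitution $u=t^2/k$, and that the coefficients sum to one follows from the identification in Section~\ref{sec-mixture-coefficients}: since $\dascf{(-\q)}{r} = 2^r\ascf{(-\q/2)}{r}$, one has $\zeta_{\q,r} = -\ascf{(-\q/2)}{r}/r!$, which are precisely the coefficients of $h_{\q/2}$, so $\sum_{r\ge1}\zeta_{\q,r}=h_{\q/2}(1)=1$.
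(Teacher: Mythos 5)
Your proposal is correct and follows essentially the same route as the paper: both decompose the non-null density term by term in $r$ via the power series \eqref{inverse-power-zeta} and reduce each term to the same Gaussian--gamma integral, with identical half-integer factorial bookkeeping and the same identification of the weights with the coefficients of $h_{\q/2}$. The only difference is organizational --- the paper packages each term probabilistically ($\psi$ as a mixture of densities $g_r(y)=y^{2r}\phi(y)/\dascf{1}{r}$ via Lemma~\ref{lem-zeta-inverse-power}, then a bivariate change of variables and marginalization in Lemma~\ref{lem-t-zeta-mixture-component}), whereas you work analytically with the one-dimensional ratio-density integral and a Tonelli interchange, which is equivalent.
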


It is convenient to write the marginal density (\ref{marginal-approx-t}) as a binary mixture
\begin{align}
\label{eq:zeta-k}
    (1-\rho)f_0(t) + \rho f_0(t) \zeta_{k}(t), \qquad \zeta_k(t) \coloneqq \sum_{r=1}^\infty \zeta_{d,r}  \frac{ t^{2r}} {\dascf 1 r} \frac{\dascf {(1+k)} r} {(k + t^2)^r}.
\end{align}
In other words, 
%where we have used formula (\ref{zeta_k}) to deduce a crucial consequence for the present work, namely that 
the density ratio $f_r(t) / f_0(t)$ for $k \geq 2$ is
\begin{equation}%\label{t_ratio}
\frac{f_r(t)} {f_0(t)} =  \frac{t^{2r}} {\dascf 1 r} \frac{\dascf {(1+k)} r} {(k + t^2)^r}. %= \frac{t^{2r}}{(k+t^2)^r} \frac{(1+k)(3+k)\cdots (2(r-1)+k)}{1\cdot 3 \cdots (2r-2) }
\end{equation}
%whose limit as $k\to\infty$ is $t^{2r} / \dascf 1 r$.
These are the key elementary functions that arise in the definition of the zeta function when the null distribution is Student-$t$ rather than standard normal. Since the series expansion~\eqref{eq:zeta-k} is convergent for all~$t\in\Real$, the function is well approximated by series truncation; an implementation is provided on the website linked in section \ref{sec-discussion}.

\subsection{Probability integral transformation}
Let $g$ be the monotone transformation $\Real\to\Real$ that transforms Student-$t$ to standard normal,
i.e.,~$X \sim f_0$ implies $g(X) \sim N(0,1)$, so $g$~is the probability integral transform that sends $t$-scores to $z$-scores. More explicitly, $g(y) = \Phi^{-1}(F_{0}(y))$, where $F_0$ is the cumulative distribution function (cdf) of the $t$-distribution with $k$ degrees of freedom, and $\Phi$ is the cdf of the standard normal distribution.
\begin{proposition}
\label{prop-pit-density}
Let $\q \in (0,2)$, and suppose $T$ is distributed according to the $t$-mixture~\eqref{marginal-approx-t}. 
Then the transformed variable $Z = gT$ is distributed as a mixture with density
\begin{align}
%\phi(z)(1-\rho) + \rho \phi(z) \sum_{r=1}^\infty \zeta_{\alpha,r} \frac{ y^{2r}} {\dascf 1 r} \frac{\dascf {(1+k)} r} {(k + y^2)^r}.
\label{pit-marginal-density}
\phi(z)\left(1-\rho + \rho \zeta_{k}(g^{-1} z)\right),
\end{align}
where $g^{-1} z$ is the $t$-score. By contrast, if $Y$ is distributed as in Theorem \ref{thm-sparse-t-marginal}, it has density
\begin{align*}
    \phi(y) \left(1-\rho+\rho \zeta_{\infty}(y) \right),
\end{align*}
where $\zeta_\infty(y) \coloneqq \lim_{k \to \infty}\zeta_k(y)$.
% , and $\zeta_{1,k}(\cdot)$ is the Bayes factor,
% \begin{align*}
%     \zeta_{1,k}(t) = \frac{1}{f_0(t)}\sum_{r=1}^\infty \zeta_{\q,r} f_r(t) = \sum_{r=1}^\infty \zeta_{d,r}  \frac{ t^{2r}} {\dascf 1 r} \frac{\dascf {(1+k)} r} {(k + t^2)^r}.
% \end{align*}
\end{proposition}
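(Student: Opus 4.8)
The plan is to handle the two assertions separately. The first is a one-dimensional change-of-variables computation whose only real input is the defining property of the probability integral transform $g$; the second follows by identifying $\zeta_\infty$ with the Gaussian-null zeta function $\zeta$ of~\eqref{inverse-power-zeta} through a termwise limit in $k$.

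For the first claim, I would first record that $g = \Phi^{-1}\circ F_0$ is a strictly increasing differentiable bijection of $\Real$ onto $\Real$, so that the law of $Z = gT$ is obtained from that of $T$ by the ordinary transformation rule $f_Z(z) = f_T(g^{-1}z)\,/\,g'(g^{-1}z)$. The key identity comes from the definition of $g$ as the map sending $f_0$ to $\phi$: differentiating $\Phi(g(t)) = F_0(t)$ gives $\phi(g(t))\,g'(t) = f_0(t)$, equivalently $g'(t) = f_0(t)/\phi(g(t))$. Writing the $t$-mixture~\eqref{marginal-approx-t} in its binary form~\eqref{eq:zeta-k}, namely $f_T(t) = f_0(t)\bigl(1-\rho+\rho\,\zeta_k(t)\bigr)$, and substituting $t = g^{-1}z$ with $\phi(g(t)) = \phi(z)$, I obtain
\[
f_Z(z) = f_T(t)\,\frac{\phi(g(t))}{f_0(t)} = \phi(z)\bigl(1-\rho+\rho\,\zeta_k(g^{-1}z)\bigr),
\]
the factor $f_0(t)$ cancelling exactly. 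This is precisely~\eqref{pit-marginal-density}, and no approximation enters: the cancellation is forced by the probability integral transform.

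For the second claim, I would pass to the limit $k\to\infty$ in~\eqref{eq:zeta-k}. Each factor $\dascf{(1+k)}{r}/(k+t^2)^r = \prod_{j=0}^{r-1}(k+2j+1)/(k+t^2)$ is a ratio of two products of $r$ terms each asymptotic to $k$, hence tends to $1$, so termwise one gets $\zeta_\infty(t) = \sum_{r\ge1}\zeta_{\q,r}\,t^{2r}/\dascf 1 r$. Using $\dascf 1 r = (2r)!/(2^r r!)$ together with $\zeta_{\q,r} = -\dascf{(-\q)}{r}/(2^r r!)$, this series is exactly the expansion~\eqref{inverse-power-zeta} of the Gaussian-null zeta function, so $\zeta_\infty = \zeta$. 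Since by the hypotheses of Theorem~\ref{thm-sparse-t-marginal} the variable $Y$ has density $(1-\rho)\phi(y)+\rho\,\phi(y)\zeta(y) = \phi(y)\bigl(1-\rho+\rho\,\zeta(y)\bigr)$, substituting $\zeta_\infty$ for $\zeta$ yields the stated form.

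The only step requiring care is justifying the termwise passage $\zeta_k\to\zeta_\infty$, that is, the interchange of limit and summation. I would control it by dominated convergence over~$r$: fixing $t$ and any reference value $k_0 > t^2$, the uniform bound $\dascf{(1+k)}{r}/(k+t^2)^r \le \dascf{(1+k_0)}{r}/k_0^r$ holds for every $k\ge k_0$, and the resulting majorant $\sum_{r\ge1}\zeta_{\q,r}\,(t^{2}/k_0)^r\,\dascf{(1+k_0)}{r}/\dascf 1 r$ converges, since the geometric factor $(t^2/k_0)^r$ has ratio below one while $\dascf{(1+k_0)}{r}/\dascf 1 r$ grows only polynomially in~$r$. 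Everything else is exact algebra, so I anticipate no genuine obstacle beyond this routine domination.
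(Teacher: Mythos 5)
Your proof is correct and follows essentially the same route as the paper: the first formula via the change-of-variables rule combined with $g'(t) = f_0(t)/\phi(g(t))$ (equivalently, the paper's inverse-function-theorem step $(g^{-1})'(z) = \phi(z)/f_0(g^{-1}z)$), and the second via the termwise limit $\dascf{(1+k)}{r}/(k+t^2)^r \to 1$ identifying $\zeta_\infty$ with the Gaussian-null zeta function of Lemma~\ref{lem-zeta-inverse-power}. The only difference is cosmetic: where the paper justifies the limit--sum interchange with a brief appeal to eventual monotonicity in $k$, you give an explicit dominating series for $k \ge k_0 > t^2$, which is if anything a more careful justification of the same step.
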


Assume we observe independent copies $(Y_i,S_i^2)$ for $i=1,\dots,n$ from the model~(\ref{normal-mean-model}), with a different signal $X_i$ for each site. Classical procedures in the multiple testing literature (e.g. \cite{simes1986improved}, \cite{benjamini1995controlling}) typically take as input a list of $p$-values. This reduction of the data, obtained by the probability integral transform of the $t$-scores, subsequently ignores the degree of freedom parameter $k$ in the calculation of false discovery rates. As expression (\ref{pit-marginal-density}) indicates, the density of the non-null component depends on $m$, whereas running the $p$-values through a general purpose multiple testing procedure such as Lindsey's method (\cite{lindsey1974comparison}, \cite{lindsey1974construction}, \cite{efron2012large}) or the BH procedure effectively assumes that the convolution occurs in the space of $z$-scores. The formula obtained in Theorem \ref{thm-sparse-t-marginal} for the zeta function relative to the Student-$t$ null facilitates a comparison between these two approaches, discussed in the next section.

% In this case, the null distribution $f_0$ is Student~$t$ on $\Real$, and the zeta function relative to $f_0$ is
% \begin{equation}\label{zeta_k}
% \zeta_{k}(t) = \sum_{r=1}^\infty \zeta_{\alpha,r} \frac{y^{2r}} {\dascf 1 r} \frac{\dascf {(1+k)} r} {(k + y^2)^r},
% \end{equation}
% which is positive, but not log-convex.

% The zeta coefficients for the inverse-power measure with index~$0 < \alpha < 2$ on $\Real$, 
% are given in~\eqref{zeta_alpha}, so that the zeta function
% is expressible as a power series in $y$:
% \[
% \zeta_k(y) = \sum_{r=1}^\infty \zeta_{\alpha,r} \, \frac{y^{2r}} {\dascf d r}.
% \]
% It follows that the product $\phi(y)\zeta(y)$ is a mixture of $\chi_{1,r}$ distributions with weights $\zeta_{\alpha,r}$.

\section{Numerical comparison}
\label{sec-numerical}

Although there are close similarities, the probability-integral transformed $t$-mixture is not the same as
the standard Gaussian mixture
\[
(1-\rho)\phi(z) + \rho \phi(z) \zeta_\infty(z). %\sum_{r=1}^\infty \zeta_{\q,r} \frac{ z^{2r}} {\dascf 1 r}. %= \phi(z)\bigl(1-\rho + \rho \zeta_{1,\infty}(z) \bigr).
\]
The zeta-function for the transformed mixture is $\zeta_{k}(g^{-1} z)$, which is close to, but not the same as $\zeta_{\infty}(z)$. The goal of this section is to demonstrate the effect of this difference on false discovery rate calculations numerically and on the HIV dataset of \cite{efron2012large}.
%Transforming the $t$-scores to $z$-scores before estimating the lfdr assumes that there was no internal standardization, so that the convolution effectively takes place in the space of $z$-scores. 

%in such an analysis is to calculate the probability that an observation arose from the null component. For instance, 
The two-groups model is a description of the data generating process that posits a latent variable indicating the component from which the observation arose,
\begin{align*}
    H &\sim \text{Bernoulli}(\rho) \\
    Z \mid H &\sim \begin{cases}
    \phi \hspace{1em} &\text{if }H=0 \\
    \psi &\text{if }H=1,
    \end{cases}
\end{align*}
where $\psi(z) = \phi(z)\zeta(z)$ is an alternative density satisfying $\psi(0)=0$. A quantity of interest is the local false discovery rate (\cite{efron2001empirical}), defined as the conditional probability that the latent variable $H$ is zero,
\begin{align}
\label{def-lfdr}
    \lfdr(z) = \P(H=0 \mid Z=z) = \frac{(1-\rho)\phi(z)}{(1-\rho)\phi(z)+\rho \psi(z)}.
\end{align}
% Note that in the convolutional model $Z=X+\e$, the event that $H=0$ is not the same as $X=0$. In fact, the calculation leading to expression (\ref{marginal-approx-z}) shows that $\P(H=1)$ corresponds to $\E(1-e^{-X^2/2})$, while $X=0$ may be a zero-probability event. In general, expression (\ref{def-lfdr}) is an upper bound on the conditional probability $\P(X=0 \mid Z)$.
The zeta function together with the sparsity rate determine the conditional odds ratio,
\begin{align*}
    \frac{1-\lfdr(z)}{\lfdr(z)} = \frac{\rho}{1-\rho} \times \zeta(z).
\end{align*}
For small degrees of freedom ($\leq 10$), the conditional odds that a signal is active can be diminished by roughly $20\%$ in the region of interest (e.g. $3 \leq |z| \leq 10$). The ratios $\zeta_{\infty}(z) / \zeta_{m}(t)$, with $z = g(t)$, are shown below for $m=10$ and three values of~$\q$:

\begin{table}[h]
\begin{center}
\caption{\label{zeta-ratio-table}Ratios $\zeta_{\infty}(z) / \zeta_{10}(t)$ for $m=10$.}
\begin{tabular}{l l l l l} 
 \hline
 $t$ & $z$ & $\q=0.5$ & $\q=1.0$ & $\q=1.5$ \\ 
 \hline
 3.0 & 2.47 &  0.84 &   0.94 &   1.05 \\
 %\hline
 4.0 & 3.02 &  0.79 &  0.93 &  1.10 \\
5.0 & 3.46  & 0.73 &  0.90 & 1.12 \\
7.0 & 4.12  & 0.65  & 0.85  & 1.12 \\
10.0 & 4.80 &  0.57 &  0.82 &  1.18 \\
15.0 & 5.51 &  0.52  & 0.85  & 1.37 \\\hline
\end{tabular}
\end{center}
\end{table}

%\vspace{-1em}

% \[\arraycolsep=5pt
% \begin{array}{rcccc}
% \multicolumn5l {\hbox{Ratios $\zeta_{\infty}(z) / \zeta_{10}(t)$ for $k=10$}} \\ \hline
% \noalign{\smallskip}
% t & z & \q=0.5 & \q=1.0 & \q=1.5 \\ \hline
% 3.0 & 2.47 &  0.84 &   0.94 &   1.05 \\
% 4.0 & 3.02 &  0.79 &  0.93 &  1.10 \\
% 5.0 & 3.46  & 0.73 &  0.90 & 1.12 \\
% 7.0 & 4.12  & 0.65  & 0.85  & 1.12 \\
% 10.0 & 4.80 &  0.57 &  0.82 &  1.18 \\
% 15.0 & 5.51 &  0.52  & 0.85  & 1.37 \\\hline
% \end{array}
% \]

It is apparent that the ratio $\zeta_{\infty}(z) / \zeta_{10}(t)$ may be appreciably less than one for heavy-tailed signals with $d<1$, or appreciably greater than one for shorter-tailed signals with $d > 1$.  Also, the ratio is not monotone as a function of the argument.

% \begin{figure}[t]
% \centering
% \includegraphics[height=8cm]{figures/zeta_plot.pdf}
% \caption{Student-$t$ zeta function for $\q=1$ plotted on the log scale against $y$ for four choices of degrees of freedom.}
% \end{figure}

The region of most interest for comparison is typically $3 \le |z| \le 6$. Ordinarily, $k=100$ and $k=\infty$ are effectively equivalent for most statistical purposes.  In sparsity calculations, however, the dependence on the degrees of freedom is far from negligible in the region of interest, even for $k$ above $100$. 
For $\q=1$, the zeta-ratios $\zeta_\infty(z)/\zeta_{100}(z)$ at $z=3,4,5,6$ are 1.14, 1.62, 3.37 and 11.68 respectively. %\textcolor{blue}{Question: any reason why we are considering $z$-values directly, rather than $t$-values here?} \textcolor{red}{The purpose of displaying the ratio between the Bayes factor on the $z$-scale versus the $t$-scale is to demonstrate a difference between directly analyzing $t$-scores, versus transforming first to $z$-scores and judging the evidence on that scale. The last three columns in the above table don't contain 1 anywhere in the region of interest $(3 \leq t \leq 15$), although it is not too far off.}  \textcolor{green}{In this paragraph, we're simply showing that $k=100$ cannot be adequately approximated by $k=\infty$ in sparsity calculations.}

\subsection{Comparison on HIV data}

% \begin{figure}[t]
% \centering
% \includegraphics[height=8cm]{figures/hiv-lfdr-t-vs-z.pdf}
% \caption{On the left panel, we plot the lfdr estimate based on the plug-in estimate of lfdr in expression (\ref{lfdr-z}) using $\hat{\rho}_2$. On the right panel, we plot the estimate of lfdr using $\hat{\rho}_1$ with degrees of freedom parameter $k=6$. A horizontal line at $\alpha=0.2$ is drawn for reference on both plots.}
% \label{fig:t-vs-z}
% \end{figure}

The HIV data, taken from Chapter 6 of \cite{efron2012large}, is a case-control study with gene-activity levels measured at 7680 genomic sites. For each site we compute a difference between the average gene expression levels for case and control, consisting of 4 HIV-positive and 4 HIV-negative individuals, respectively. The histogram of resulting differences is asymmetric, and although the methods we demonstrate in this section here assume a symmetric signal distribution, we still find it worthwhile to make the comparison on this dataset, as it has a small degree of freedom parameter $(k=6)$ for the pooled variance estimate.

In the analysis by \cite{efron2012large}, the goal of the study was to identify a small subset of the genes that are potentially relevant towards understanding HIV. Each difference was divided by a pooled standard error to obtain a $t$-score, and these were transformed into $z$-scores via $Z_i = \bar{\Phi}^{-1}(1-F_0(T_i))$, where $\bar{\Phi} = 1- \Phi$ is the complement of the standard normal cdf and $\bar{\Phi}^{-1}$ is its quantile function. After this pre-processing step, the $(Z_i)$ were viewed as independent samples from a two-groups model with standard Gaussian null component,
\begin{align*}
    H_i &\stackrel{\iid}{\sim} \text{Bernoulli}(\rho) \\
    Z_i \mid H_i &\sim (1-H_i) \phi + H_i \psi, \hspace{1em} \text{ independently}
\end{align*}
%for an alternative distribution $\psi$ satisfying $\psi(0)=0$. This zero-density assumption is made to identify the parameters and is commonly made in the two-groups model (see e.g. \cite{patra2016estimation}, \cite{efron2012large}). 
If $\psi$ is the non-null component defined by the inverse-power exceedance with index $\q \in (0,2)$, the maximized log likelihood relative to the null model is 48.23, which occurs at
\begin{align*}
    \hat{\rho}_z = 0.0059, \hspace{1em} \hat{\q}_z = 1.09.
\end{align*}
In formula (\ref{def-lfdr}), these values yield an estimate $\hat{\ell}_{z,i}$ of the lfdr for each site $i=1,\dots,7680$. 
Assuming the null component is instead Student-$t$, i.e.,~each $T_i$ comes from the two-groups model (\ref{marginal-approx-t}), the maximized log likelihood is 56.13, which occurs at
\begin{align*}
    \hat{\rho}_t = 0.0045, \hspace{1em} \hat{\q}_t = 0.60.
\end{align*}
Combining these estimates with the formula for the zeta function when the null is Student-$t$ gives an analogous set of local fdr estimates 
\begin{align*}
    \hat{\ell}_{t,i} \coloneqq \frac{1-\hat{\rho}_t}{1-\hat{\rho}_t + \hat{\rho}_t \zeta_k(T_i)}, \hspace{1em} i=1,\dots,7680.
\end{align*}
$\zeta_k$ can be computed quickly and accurately by truncating the infinite series representation \eqref{eq:zeta-k}. The resulting local fdr estimates for the sites with the largest (in absolute value) test statistics are displayed in Table \ref{tab:lfdr-table-d-estimated}. The BH procedure at level $\alpha=0.1$ yields 16 rejections, and the $\hat{\ell}_{t,i}$ values among these range from 0.00 to 0.42, with an average of $0.108$. Within the BH$(0.1)$ set, the $\hat{\ell}_{z,i}$ values range from $0.00$ to $0.51$, with an average of $0.144$.

% \[\arraycolsep=7pt
% \begin{array}{rcccc}
% \multicolumn5l {\hbox{Local fdr values for the top 6 differences}} \\ \hline
% \noalign{\smallskip}
% Z_i & T_i & \hat{\ell}_{z,i} & \hat{\ell}_{t,i} & |\hat{\ell}_{z,i}-\hat{\ell}_{t,i}| \\ \hline
% -4.42 & -13.63 &  0.14  & 0.09  & 0.055 \\
% -4.26 & -12.01 &  0.23 &  0.16 &  0.077 \\
% 4.11 & 10.72  & 0.34 &  0.24 & 0.092 \\
% -3.98 & -9.69 &  0.44 &   0.35 &   0.095 \\
% -3.94 & -9.38 &  0.48 &  0.39  &  0.094 \\
% -3.90 & -9.13  & 0.51  & 0.42  & 0.091 \\
% \hline
% \end{array}
% \]
% \vspace{1em}
% {\color{red}include a label / caption for this table}

\begin{table}[h]
\begin{center}
\caption{\label{tab:lfdr-table-d-estimated}Estimated local fdr values for the top 6 test statistics.}
\begin{tabular}{l l l l l} 
 \hline
 \noalign{\smallskip}
 \hspace{1em} $Z_i$ & \hspace{1em}$T_i$ & $\hat{\ell}_{z,i} \times 10^4$ & $\hat{\ell}_{t,i} \times 10^4$ & $\hat{\ell}_{z,i}/\hat{\ell}_{t,i}$ \\ \hline
 $-5.91$ & $-52.16$ &  $1.54$ &   $0.77$ &   $2.00$ \\
 $-5.74$ & $-43.89$ &  $3.93$ &  $1.95$ &  $2.01$ \\
$-5.60$ & $-38.36$  & $8.12$ &  $4.03$ & $2.02$ \\
$-5.53$ & $-35.86$  & $11.65$  & $5.78$  & $2.01$ \\
$-5.40$ & $-31.57$ &  $22.94$ &  $11.44$ &  $2.01$ \\
$-5.13$ & $-24.74$ &  $82.10$  & $41.86$  & $1.96$ \\\hline
\end{tabular}
\end{center}
\end{table}

%\noindent The smallest fitted $\hat{\ell}_{t,i}$ values are approximately one half the smallest fitted $\hat{\ell}_{z,i}$ values. 

% \subsubsection{Fixed $\q=1$}

% In this subsection 

%To illustrate the effect of degrees of freedom on lfdr in the space of $t$-scores versus $z$-scores, w
We also compare the lfdr estimates with $d=1$ fixed, i.e., plugging $d=1$ into the formula \eqref{def-lfdr} for both $t$ and $z$ scores instead of a maximum likelihood estimate $\hat{d}_z$ or $\hat{d}_t$, which differ non-negligibly for the HIV data. %) and only estimating $\hat{\rho}$.
The resulting estimates of lfdr are shown in Table \ref{tab:lfdr-table-d=1}. In other words, the factor of 2 in the last column of Table \ref{tab:lfdr-table-d-estimated} could arise in part from the difference between $\hat{d}_z$ and $\hat{d}_t$, and this effect is controlled for by fixing $d=1$ in the computation for $\hat{\ell}_z$ and $\hat{\ell}_t$.

The maximum likelihood estimates of the sparsity rates are $\hat{\rho}_z = 0.0053$ and $\hat{\rho}_t = 0.0078$, while the maximized log likelihoods are 48.13 and 52.99 relative to the null ($\rho=0$). Since the estimated power index for the $z$-scores is close to 1 ($\hat{d}_{z}=1.09$), the resulting lfdr estimates are nearly the same as the ones in Table \ref{tab:lfdr-table-d-estimated} when $d=1$ is held fixed rather than estimated.

\begin{table}[h]
\begin{center}
\caption{\label{tab:lfdr-table-d=1}Estimated local fdr values for the top 6 test statistics with $d=1$.}
\begin{tabular}{l l l l l} 
 \hline
 \noalign{\smallskip}
 \hspace{1em} $Z_i$ & \hspace{1em}$T_i$ & $\hat{\ell}_{z,i} \times 10^4$ & $\hat{\ell}_{t,i} \times 10^4$ & $\hat{\ell}_{z,i}/\hat{\ell}_{t,i}$ \\ \hline
 $-5.91$ & $-52.16$ &  $1.52$ &   $1.43$ &   $1.06$ \\
 $-5.74$ & $-43.89$ &  $3.88$ &  $3.38$ &  $1.15$ \\
$-5.60$ & $-38.36$  & $8.02$ &  $6.60$ & $1.22$ \\
$-5.53$ & $-35.86$  & $11.53$  & $9.22$  & $1.25$ \\
$-5.40$ & $-31.57$ &  $22.78$ &  $17.32$ &  $1.32$ \\
$-5.13$ & $-24.74$ &  $82.01$  & $57.30$  & $1.43$ \\\hline
\end{tabular}
\end{center}
\end{table}
%{\color{red} put tables in same format, multiply by $10^4$ or $10^5$ in heading, mention estimates $\hat{\ell}_{z,i}$ are nearly the same}

% \[\arraycolsep=8pt
% \begin{array}{rcccc}
% \multicolumn5l {\hbox{Local fdr for top 6 differences with $\q=1$}} \\ \hline
% \noalign{\smallskip}
% Z_i & T_i & \hat{\ell}_{z,i} & \hat{\ell}_{t,i} & |\hat{\ell}_{z,i}-\hat{\ell}_{t,i}| \\ \hline
% 4.11 & 10.72 &  0.34 &  0.24 &  0.107 \\
% -3.98 & -9.69  & 0.45 &  0.33 & 0.125 \\
% -3.93 & -9.38 &  0.49 &  0.36  &  0.128 \\
% -3.90 & -9.13 &  0.52 &   0.39 &   0.130 \\
% -3.61 & -7.41  & 0.72  & 0.60  & 0.116 \\
% -3.45 & -6.64 &  0.80  & 0.70  & 0.096 \\\hline
% \end{array}
% \]
% \vspace{1em}

The values of $\hat{\ell}_{t,i}$ among the 16 BH($\alpha=0.1$) rejections now range from $0.00$ to $0.39$ with an average value of $0.104$, whereas the values $\hat{\ell}_{z,i}$ among the rejections range from $0.00$ to $0.52$ with an average value of $0.147$.

\section{Discussion}
\label{sec-discussion}

In this paper, we derived a mixture representation of the non-null density of a $t$-ratio when the signal is sparse, and illustrated the maximum-likelihood procedure for estimating the local false discovery rate when the signal distribution has an inverse-power exceedance measure. The formula for the density is determined by the Student-$t$ zeta function and the sparsity rate $\rho$, and depends explicitly on the degrees of freedom parameter~$k$. When $k \to \infty$, the formula recovers the Bayes factor for the standard Gaussian mixture in the sparse setting. For small $k$, we have demonstrated differences in the deduced local false discovery rates, both numerically in the region of interest, and on a dataset of high-throughput gene expression levels of HIV patients. Although neither model accommodates the asymmetry that is present in the HIV data, the Student~$t$-model fits appreciably better than the transformed $z$-model in terms of the log-likelihood, and the average fitted lfdr-value within typical $\hbox{BH}_\alpha$~subsets is a reasonably close match with~$\alpha$.
%as well as on average within a set of discoveries made by the BH procedure run at a typical level $\alpha=0.1$.

The analysis presented here is agnostic to the assumption of equal variances across genes. If there were reason to believe that the variance was constant across sites, we could pool the estimates, yielding a scaled $\chi^2$ variable with essentially infinite degrees of freedom.  %In this case, we would expect the expressions (\ref{lfdr-t}) and (\ref{lfdr-z}) to be much closer for a fixed $\rho$.
Our analysis also assumes that the sparsity rate is the same at every site, and the joint distribution of $(X_i, Y_i)$ depends on $(\rho, d, \sigma_i)$. But the local false discovery formula \eqref{def-lfdr} is agnostic on the question of independence from one site to another.

\paragraph{Reproducibility.}
The R code and data used to generate Tables \ref{tab:lfdr-table-d-estimated} and \ref{tab:lfdr-table-d=1} can be found at the following Github link:

\href{https://github.com/dan-xiang/dan-xiang.github.io/tree/master/t-statistics-paper}{https://github.com/dan-xiang/dan-xiang.github.io/tree/master/t-statistics-paper}
%{\color{blue} Availability of R code...}

\newpage

\appendix

\section{Proofs}
\label{sec-proofs}

\begin{proof}[Proof of Example \ref{ex:student-t}]
    Let $P_\sigma(x)=p_\sigma(x) \de x$ denote the student $t$ distribution on $d$ degrees of freedom with standard density:
    \begin{align*}
        p_1(x) &= \frac{\Gamma(\frac{d+1}{2})}{\sqrt{d\pi} \Gamma(d/2)} \left( 1 + \frac{x^2}{d}\right)^{-\frac{(d+1)}{2}}
    \end{align*}
    It suffices to show
    \begin{align*}
        \sigma^{-d} \frac{C_d \sqrt{\pi}\Gamma(d/2)}{d^{d/2}\Gamma(\frac{d+1}{2})} p_\sigma(x) \to \frac{C_d}{|x|^{d+1}}.
    \end{align*}
    The left hand side is equal to
    \begin{align*}
        \rho^{-1} p_\sigma(x) &= \sigma^{-d} \cdot \frac{C_d \sqrt{\pi}\Gamma(d/2)}{d^{d/2}\Gamma(\frac{d+1}{2})} \cdot \frac{1}{\sigma} \frac{\Gamma(\frac{d+1}{2})}{\sqrt{d\pi} \Gamma(d/2)} \left( 1 + \frac{(x/\sigma)^2}{d}\right)^{-\frac{(d+1)}{2}} \\
        &= \sigma^{-(d+1)} \cdot \frac{C_d}{d^{d/2}\sqrt{d}} \cdot \left(\sigma^{2} + \frac{x^2}{d} \right)^{-\frac{(d+1)}{2}} \sigma^{d+1}  \\
        &\to \frac{C_d}{d^{(d+1)/2}}\cdot \frac{d^{\frac{d+1}{2}}}{|x|^{d+1}} = \frac{C_d}{|x|^{d+1}}.
    \end{align*}
\end{proof}

\begin{proof}[Proof of Theorem \ref{thm-sparse-scale}]
    Let $P_\sigma(\de x) = p_\sigma(x) \de x$ denote a sparse scale family with standard density $p_1$, sparsity rate $\rho(\sigma)$, and exceedance measure $H(\de x)=h(x) \de x$. Then as $\sigma \to 0$
    \begin{align*}
        p_\sigma(x) =  \frac{p_1(x/\sigma)}{\sigma} \sim \rho(\sigma) h(x)
    \end{align*}
    for any $x \neq 0$. Taking $x=1$, the above equivalence gives
    \begin{align*}
        p_1(u) \sim h(1) \rho(u^{-1}) |u|^{-1} \text{ as } |u| \to \infty,
    \end{align*}
    which implies that for any $x > 0$, as $\sigma \to 0$
    \begin{align*}
        h(x) \sim \frac{p_1(x/\sigma)}{\sigma \rho(\sigma)} \sim \frac{h(1)\rho(\sigma/x)\cdot (\sigma/x)}{\sigma \rho(\sigma)} = h(1) \cdot\frac{ \rho(\sigma/x)}{x\rho(\sigma)}.
    \end{align*}
    %Since $h(x)$ is constant in  the ratio 
    Equivalently, %$\rho(\sigma/x)/\rho(\sigma)$ tends to
    \begin{align*}
        \frac{\rho(\sigma/x)}{\rho(\sigma)} \to \frac{xh(x)}{h(1)}.
    \end{align*}
    Let $\gamma(u) \coloneqq \frac{1}{\rho(u^{-1})}$. Then the above convergence implies that for any $x>0$,
    \begin{align}
    \label{eq:gamma-regular-variation}
        \lim_{u \to \infty} \frac{\gamma(xu)}{\gamma(u)} = \frac{h(1)}{xh(x)}.
    \end{align}
    This condition implies $h(x) = h(1)x^{-d-1}$ for some $d \in \R$ (see Lemma 1 in Chapter 8 of \cite{feller1991introduction}). Therefore $\gamma(u) =u^dL(u)$ for some slowly varying function $L: (0,\infty) \to (0,\infty)$. Indeed, let $L(u) \coloneqq \gamma(u)/u^d$. Then for any $a>0$, \eqref{eq:gamma-regular-variation} implies
    \begin{align*}
        \frac{L(au)}{L(u)}=\frac{\gamma(au)u^{d}}{\gamma(u) (au)^{d}} = \frac{\gamma(au)}{\gamma(u) a^{d}} \to \frac{h(1)}{a h(a)} a^{-d} = \frac{h(1)}{a \cdot h(1) a^{-d-1}} a^{-d}=1.
    \end{align*}
    In particular, this implies $\rho(\sigma)=\sigma^{d} / L(\sigma^{-1})$. Further, we deduce $d<2$ since sparsity implies $h(x)$ 
    % \begin{align*}
    %     h(x) \sim h(1)x^{-1} \frac{(\sigma/x)^{d}L(1/\sigma)}{\sigma^d L(x/\sigma)} = h(1) x^{-(d+1)} \cdot \frac{L(1/\sigma)}{L(x/\sigma)} \sim h(1) x^{-(d+1)}
    % \end{align*}
    must be integrable against functions satisfying $w(x) = O(x^2)$ as $x \to 0$.
\end{proof}

\begin{proof}[Properties of the distribution with probability generating function $h_\alpha$ (section \ref{sec-mixture-coefficients})]
For $X$ distributed according $h_\alpha$, the reciprocal moment is the harmonic number 
\[
E(1/X) = \int_0^1 t^{-1} h_\alpha(t)\, dt =  \psi(\alpha+1) - \psi(1),
\]
where $\psi$ is the derivative of the log gamma function.  
The inverse factorial moments are
\[
E\biggl( \frac {r! } {\ascf {(X+1)} r}\biggr) = \frac \alpha {\alpha + r}.
\]
In the special case $\alpha=1/2$, the probabilities are $\zeta_r = C_{r-1}/2^{2r-1}$, where $C_r = (2r)!/(r!(r+1)!)$ is the $r$th Catalan number. Since $h_\alpha^2(t) = 2h_\alpha(t) - h_{2\alpha}(t)$ and $h_1(t) = t$, the distribution  $h_{1/2}$ has the peculiar property that the two-fold convolution is equal to the conditional distribution given $X > 1$. It is the unique distribution on the natural numbers having this property.
\end{proof}

\begin{proof}[Proof of Theorem \ref{thm-sparse-t-marginal}]

It follows from Lemma \ref{lem-zeta-inverse-power} that $\psi$ is a mixture
\begin{align*}
    \psi(y) = \sum_{r=1}^\infty \zeta_{\q,r} \cdot g_r(y) ,\hspace{1em} g_r(y) \coloneqq \frac{y^{2r}\phi(y)}{\dascf 1 r}.
\end{align*}
Note that $g_r$ is a probability density since $\int y^{2r} \phi(y) \,\de y = \dascf 1 r$ is the formula for a Gaussian moment of even degree. If $X_r \sim g_r$ and $S^2 \sim \chi^2_k$, then by Lemma \ref{lem-t-zeta-mixture-component}, the density of $T_r = X_r \sqrt{k/S^2}$ is given by
\begin{align*}
        f_r(t) &= \frac{t^{2r}} {( 1 + t^2/k)^{r+1/2+k/2}} \times \frac{\Gamma(1/2)} {k^{r + 1/2} \pi^{1/2} B(r+1/2, k/2)},
\end{align*}
so the marginal distribution of $T = Y\sqrt{k/S^2}$ can be written
\begin{align*}
    (1-\rho) f_0(t) + \rho \sum_{r=1}^\infty \zeta_{\q,r} f_r(t).
\end{align*}
\end{proof}

\begin{proof}[Proof of Proposition \ref{prop-pit-density}]
The density of $Z$ is 
\begin{align*}
\left[(1-\rho) f_0(g^{-1}z) + \rho f_0 (g^{-1}z) \zeta_{k}(g^{-1}z)\right](g^{-1})'(z)
\end{align*}
By the inverse function theorem,
\begin{align*}
(g^{-1})'(z) = \frac{1}{g'(g^{-1}z)} = \frac{\phi(z)}{f_0(g^{-1}z)},
\end{align*}
which implies the first formula. For the second formula, note that by Lemma \ref{lem-zeta-inverse-power}, the density of $Y$ can be written $(1-\rho)\phi(y)+\rho \phi(y)\zeta(y)$, where
\begin{align*}
     \zeta(y) &= \sum_{r=1}^\infty \zeta_{\q,r} \frac{y^{2r}}{\dascf 1 r}, \hspace{1em} \zeta_{\q,r} = - \frac{\dascf {(-d)} r}{2^r r!}.
\end{align*}
Now since $\lim_{k\to \infty}\frac{\dascf {(1+k)} r}{(k+y^2)^r} = 1$ for fixed $r\in \N$ and $y \in \R$, 
\begin{align*}
    \zeta_k(y) = \sum_{r=1}^\infty \zeta_{\q,r} \frac{y^{2r}}{\dascf 1 r} \frac{\dascf {(1+k)} r}{(k+y^2)^r} \to \sum_{r=1}^\infty \zeta_{\q,r} \frac{y^{2r}}{\dascf 1 r} = \zeta(y) \hspace{1em} \text{ as } k \to \infty,
\end{align*}
since the sequence $\frac{\dascf {(1+k)} r}{(k+y^2)^r}$ is eventually monotone in $k$, and the series is convergent for each fixed $k$. %, as noted by \cite{tresoldi2021sparsity}. %When the sequence increases to 1, the series is convergent since each summand is dominated by $\zeta_{\q,r} \frac{y^{2r}}{\dascf 1 r}$, which is summable. When the sequence decreases to 1, each term is dominated by $\frac{\dascf {(1+k)} r}{(k+y^2)^r}$ for some fixed $k$, and these are summable.

\end{proof}

\begin{lemma}
\label{lem-zeta-inverse-power}
    Suppose $Y$ distributed
    \begin{align*}
    Y &\sim (1-\rho)\phi+\rho \psi, \hspace{1em}\psi(y) \coloneqq \phi(y) \zeta(y) \\
    \zeta(y) &= \int_{\R \backslash \{0\}}(\cosh(xy)-1) e^{-x^2/2}H_\q(\de x)
\end{align*} 
for a unit inverse-power exceedance $H_\q$ with index $\q \in (0,2)$. Then 
\begin{align*}
    \zeta(y) = \sum_{r=1}^\infty \zeta_{\q,r} \frac{y^{2r}}{\dascf 1 r}, \hspace{1em} \zeta_{\q,r} \coloneqq -\frac{\dascf {(-\q)} {r}} { 2^r r!} .
\end{align*}
\end{lemma}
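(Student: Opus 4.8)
The statement is precisely the series expansion \eqref{inverse-power-zeta}, so the plan is to establish it by termwise integration of the $\cosh$ series against the inverse-power density. First I would expand $\cosh(xy)-1 = \sum_{r=1}^\infty (xy)^{2r}/(2r)!$ and substitute $H_\q(\de x) = C_\q\,\de x/|x|^{\q+1}$, giving
\begin{align*}
\zeta(y) = \sum_{r=1}^\infty \frac{y^{2r}}{(2r)!}\, C_\q \int_{\R\setminus\{0\}} |x|^{2r-\q-1}\, e^{-x^2/2}\,\de x.
\end{align*}
Since every summand is non-negative, Tonelli's theorem justifies interchanging the sum and the integral. Each moment integral converges: at the origin because $2r-\q-1 > -1$ for $r\ge 1$ and $\q<2$, and at infinity because of the Gaussian factor.

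Next I would evaluate the even moment by symmetry and the substitution $u=x^2/2$, which turns it into a Gamma integral,
\begin{align*}
C_\q \int_{\R} |x|^{2r-\q-1} e^{-x^2/2}\,\de x = C_\q\, 2^{\,r-\q/2}\,\Gamma\!\left(r-\tfrac{\q}{2}\right),
\end{align*}
finite and positive since $r-\q/2>0$. To match this against the claimed coefficients I would rewrite the double-step rising factorial through the Gamma function using $\dascf{(-\q)}{r} = 2^r\,\ascf{(-\q/2)}{r} = 2^r\,\Gamma(r-\q/2)/\Gamma(-\q/2)$, together with $\dascf 1 r = (2r)!/(2^r r!)$. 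The latter identity reduces the target term $\zeta_{\q,r}\, y^{2r}/\dascf 1 r$ to $-\dascf{(-\q)}{r}\, y^{2r}/(2r)!$, so the whole lemma collapses to the single scalar identity $C_\q\, 2^{-\q/2} = -1/\Gamma(-\q/2)$.

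The only nontrivial point — and thus the step I would treat most carefully — is verifying this constant identity against the normalization of $C_\q$. Here I would apply the recurrence $\Gamma(1-\q/2) = (-\q/2)\,\Gamma(-\q/2)$, so that $\Gamma(-\q/2) = -2\,\Gamma(1-\q/2)/\q$, and substitute the definition $C_\q = \q\,2^{\q/2-1}/\Gamma(1-\q/2)$ from Example~\ref{ex:student-t} to confirm the equality. As a byproduct, $-\q/2\in(-1,0)$ forces $\Gamma(-\q/2)<0$, which makes every $\zeta_{\q,r}=-\dascf{(-\q)}{r}/(2^r r!)$ strictly positive, consistent with the convergence and probability-distribution remarks in Section~\ref{sec-mixture-coefficients}. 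Finally I would note the resulting series $\sum_r \zeta_{\q,r}\,y^{2r}/\dascf 1 r$ converges for every $y\in\R$ because the denominator $\dascf 1 r = (2r)!/(2^r r!)$ grows faster than any geometric factor, so the terms decay superexponentially in $r$ for fixed $y$.
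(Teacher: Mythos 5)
Your proof is correct and follows essentially the same route as the paper's: expand $\cosh(xy)-1$ as a power series, integrate termwise against $e^{-x^2/2}H_\q(\de x)$, evaluate the resulting Gaussian-weighted moment as a Gamma integral, and use the recurrence $\Gamma(1-\q/2)=(-\q/2)\Gamma(-\q/2)$ to match the constant $C_\q$ to the coefficients $\zeta_{\q,r}$. Your reorganization into a single scalar identity $C_\q\,2^{-\q/2}=-1/\Gamma(-\q/2)$, and the explicit Tonelli justification for the interchange, are minor refinements of the same argument rather than a different approach.
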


\begin{proof}
    By definition, the zeta function for $Y$ is
    \begin{align*}
        \zeta(y) &= \int_{\R \backslash \{0\}} (\cosh(xy)-1)e^{-x^2/2} H_\q(\de x) \\
        &= \int_{\R \backslash \{0\}}\sum_{r=1}^\infty \frac{(xy)^{2r}}{(2r)!} e^{-x^2/2} H_\q(\de x) \\
        &= \sum_{r=1}^\infty \frac{y^{2r}}{\dascf 1 r} \cdot \frac{\dascf 1 r}{(2r)!} \int_{\R \backslash \{0\}}x^{2r} e^{-x^2/2} H_\q(\de x) .
    \end{align*}
    It remains to show that
    \begin{align}
    \label{wtp-zeta-coeff}
        \frac{\dascf 1 r}{(2r)!} \int_{\R \backslash \{0\}} x^{2r}e^{-x^2/2} H_\q(\de x) = -\frac{\dascf {(-\q)} {r}} { 2^r r!} .
    \end{align}
    The left hand side can be evaluated
    \begin{align*}
    \frac{\dascf 1 r}{(2r)!} \int_{\R \backslash \{0\}} x^{2r} e^{-x^2/2} H_\q (\de x) &= \frac{\dascf 1 r}{(2r)!} \int_{\R \backslash \{0\}} x^{2r} e^{-x^2/2} \frac{\q 2^{\q/2 - 1}} {\Gamma(1-\q/2)} \frac{\de x} {|x|^{1+\q}} \\
    &= \frac{2^{\q/2} }{-\Gamma(-\q/2)2^r r!} \int_{\R \backslash \{0\}} |x|^{2r-1-\q} e^{-x^2/2} \,\de x,
    \end{align*}
    using the definition $\Gamma(z+1)=z\Gamma(z)$. Now using that $\E(|Z|^{p}) = \frac{2^{p/2}\Gamma\left( \frac{p+1}{2}\right)}{\sqrt{\pi}}$ for $p > -1$ and $Z \sim N(0,1)$, the above becomes
\begin{align*}
%\label{zeta_alpha}
    &= \frac{2^{\q/2} \sqrt{2\pi}}{-\Gamma(-\q/2)2^r r!} \cdot \frac{2^{\frac{2r-1-\q}{2}}\Gamma\left( \frac{2r-1-\q+1}{2}\right)}{\sqrt{\pi}} \\
    &= \frac{\Gamma\left( r-\q/2\right)}{-\Gamma(-\q/2) r!} =\frac{\q(2-\q)\cdots (2r-2-\q)} {2 \cdot 4 \cdots (2r)}= -\frac{\dascf {(-\q)} {r}} { 2^r r!} 
\end{align*}
which are monotone decreasing at rate $O(r^{-1-\q/2})$ for large~$r$.

\end{proof}

\begin{lemma}
\label{lem-t-zeta-mixture-component}
    Suppose $X_1\sim g_r$ where $g_r(x) = \frac{x^{2r}\phi(x)}{\dascf 1 r}$ and $X_2^2 \sim \chi_k^2$ independently. Then the density of $T_1 \coloneqq X_1 \sqrt{k/X_2^2}$ is given by
    \begin{align*}
        f_r(t) = \frac{\Gamma(1/2)} {k^{r + 1/2} \pi^{1/2} B(r+1/2, k/2)} \cdot \frac{t^{2r}}{(1+t^2/k)^{r+1/2+k/2}}.
    \end{align*}
    
\end{lemma}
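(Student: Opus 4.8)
The plan is to obtain the density of the ratio $T_1 = X_1\sqrt{k/X_2^2}$ by integrating out the denominator. Write $W \coloneqq X_2^2 \sim \chi_k^2$, independent of $X_1$, and condition on $W = w$. Given $W=w$, the map $X_1 \mapsto X_1\sqrt{k/w}$ is a linear rescaling, so the conditional density of $T_1$ at $t$ is $g_r(t\sqrt{w/k})\sqrt{w/k}$ by the change of variables $x = t\sqrt{w/k}$. Multiplying by the $\chi_k^2$ density of $W$ and integrating gives the one-dimensional representation
\begin{align*}
f_r(t) = \int_0^\infty g_r\!\left(t\sqrt{w/k}\right)\sqrt{w/k}\;\frac{w^{k/2-1}e^{-w/2}}{2^{k/2}\Gamma(k/2)}\,\de w.
\end{align*}

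Next I would substitute $g_r(x) = x^{2r}\phi(x)/\dascf 1 r$ with $\phi(x) = e^{-x^2/2}/\sqrt{2\pi}$. The factor $(t\sqrt{w/k})^{2r} = t^{2r}(w/k)^r$ pulls $t^{2r}$ out of the integral, while the two exponentials combine into the single rate $e^{-w(t^2+k)/(2k)}$. Collecting the powers of $w$ yields $w^{r + k/2 - 1/2}$, so the remaining integral is the Gamma integral $\int_0^\infty w^{a-1}e^{-bw}\,\de w = \Gamma(a)/b^{a}$ with shape $a = r + \tfrac12 + \tfrac{k}{2}$ and rate $b = (t^2+k)/(2k)$. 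Factoring $t^2 + k = k(1+t^2/k)$ inside $b^{-a}$ produces the target denominator $(1+t^2/k)^{r+1/2+k/2}$ and accounts for all of the $k$-powers.

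The only real work is the bookkeeping of normalizing constants. The powers of two arising from $b^{-a}=\bigl(2k/(t^2+k)\bigr)^{a}$, from the $\sqrt{2\pi}$ in $\phi$, and from $2^{k/2}$ in the $\chi_k^2$ density must be reconciled with the double factorial $\dascf 1 r$. The key elementary identity is $\dascf 1 r = (2r-1)!! = 2^r\,\Gamma(r+\tfrac12)/\Gamma(\tfrac12)$, which cancels the leftover $2^r$ and the $\Gamma(\tfrac12)=\sqrt\pi$, collapsing the numerator constant to $1/\Gamma(r+\tfrac12)$. The surviving Gamma ratio is then recognized as a Beta function via $\Gamma(r+\tfrac12)\Gamma(\tfrac{k}{2})/\Gamma(r+\tfrac12+\tfrac{k}{2}) = B(r+\tfrac12,\tfrac{k}{2})$, giving exactly the asserted form of $f_r(t)$. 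I expect this constant-chasing to be the main (though routine) obstacle; there is no analytic subtlety beyond the convergence of the Gamma integral, which holds for every $t\in\Real$ since $a>0$ and $b>0$.

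As an independent check one may observe that symmetry of $g_r$ makes $X_1^2$ a $\chi_{2r+1}^2$ variable, so that $kX_1^2/X_2^2 = (2r+1)\,F_{2r+1,k}$ in distribution; transforming this $F$-density back through $t\mapsto t^2$ and symmetrizing in sign reproduces the same $f_r$, confirming the normalization.
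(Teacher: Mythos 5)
Your proposal is correct and takes essentially the same route as the paper: conditioning on $W = X_2^2 = w$ and evaluating the resulting Gamma integral $\Gamma(a)/b^{a}$ is the same computation as the paper's bivariate change of variables $(x_1,x_2)\mapsto(x_1\sqrt{k/x_2},\,x_2)$ followed by integrating out the second coordinate against a recognized Gamma density, and your constant bookkeeping via $\dascf 1 r = 2^r\Gamma(r+\tfrac12)/\Gamma(\tfrac12)$ and the Beta identity lands on exactly the stated normalization. The closing $F_{2r+1,k}$ sanity check (using $X_1^2\sim\chi^2_{2r+1}$) is a nice independent confirmation, though not needed.
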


\begin{proof}
    Independence implies that the joint density of $X_1, X_2$ on $\Real^d\times\Real^+$ is the product
\[
x_1^{2r} e^{-x_1^2/2} x_2^{k/2-1} e^{-x_2/2} \times 
\frac 1 {(2\pi)^{1/2} \dascf 1 r} \frac 1 {2^{k/2} \Gamma(k/2)}.
\]
The transformation $(x_1, x_2) \mapsto (t_1, t_2) = (x_1 \sqrt{k/x_2}, x_2)$ has Jacobian $\sqrt{t_2/k}$, so the joint distribution of the transformed variables is
\begin{eqnarray*}
t_1^{2r} (t_2/k)^r e^{-t_2 (t_1^2 / k + 1)/2}\, t_2^{k/2-1} \sqrt{t_2/k} \times \frac{1}{(2\pi)^{1/2}\dascf 1 r 2^{k/2}\Gamma(k/2)} \\
= \frac{1} {k^{r + 1/2}(2\pi)^{1/2}\dascf 1 r 2^{k/2}\Gamma(k/2)} t_1^{2r} \cdot t_2^{r+1/2+k/2 - 1} e^{-t_2 (t_1^2 / k + 1)/2} .
\end{eqnarray*}
We recognize the pdf of the Gamma$\left(r+1/2+k/2,(1+t_1^2/k)/2\right)$ distribution in the above expression, and integrate over $t_2\in\Real^+$ to obtain the marginal density of $T_1$ at $t_1\in\Real$:
\begin{eqnarray*}
f_r(t_1) &=&\frac{t_1^{2r}} {( 1 + t_1^2/k)^{r+1/2+k/2}} \times
	\frac{\Gamma(r+1/2+k/2)\, 2^{r+1/2+k/2}}  {k^{r + 1/2} (2\pi)^{1/2} \dascf 1 r \, 2^{k/2} \Gamma(k/2)} \\
&=& \frac{t_1^{2r}} {( 1 + t_1^2/k)^{r+1/2+k/2}} \times \frac{\Gamma(1/2)} {k^{r + 1/2} \pi^{1/2} B(r+1/2, k/2)},
\end{eqnarray*}
where $B(\cdot,\cdot)$ is the beta function. Note that $f_0(\cdot)$ is the Student-$t$ density on $k$~degrees of freedom on $\Real$.

\end{proof}

\bibliographystyle{dcu}
\bibliography{reference}

@article{efron2001empirical,
  title={Empirical {B}ayes analysis of a microarray experiment},
  author={Efron, Bradley and Tibshirani, Robert and Storey, John D and Tusher, Virginia},
  journal={Journal of the American Statistical Association},
  volume={96},
  number={456},
  pages={1151--1160},
  year={2001},
  publisher={Taylor \& Francis}
}

@article{benjamini1995controlling,
  title={Controlling the false discovery rate: a practical and powerful approach to multiple testing},
  author={Benjamini, Yoav and Hochberg, Yosef},
  journal={Journal of the Royal Statistical Society: series B (Methodological)},
  volume={57},
  number={1},
  pages={289--300},
  year={1995},
  publisher={Wiley Online Library}
}

@article{simes1986improved,
  title={An improved {B}onferroni procedure for multiple tests of significance},
  author={Simes, R John},
  journal={Biometrika},
  volume={73},
  number={3},
  pages={751--754},
  year={1986},
  publisher={Oxford University Press}
}

@book{efron2012large,
  title={Large-scale inference: empirical Bayes methods for estimation, testing, and prediction},
  author={Efron, Bradley},
  volume={1},
  year={2012},
  publisher={Cambridge University Press}
}

@article{lindsey1974comparison,
  title={Comparison of probability distributions},
  author={Lindsey, JK},
  journal={Journal of the Royal Statistical Society: Series B (Methodological)},
  volume={36},
  number={1},
  pages={38--47},
  year={1974},
  publisher={Wiley Online Library}
}

@article{lindsey1974construction,
  title={Construction and comparison of statistical models},
  author={Lindsey, JK},
  journal={Journal of the Royal Statistical Society: Series B (Methodological)},
  volume={36},
  number={3},
  pages={418--425},
  year={1974},
  publisher={Wiley Online Library}
}

@article{mccullagh2018statistical,
  title={Statistical sparsity},
  author={McCullagh, Peter and Polson, Nicholas G},
  journal={Biometrika},
  volume={105},
  number={4},
  pages={797--814},
  year={2018},
  publisher={Oxford University Press}
}

@article{mitchell1988bayesian,
  title={Bayesian variable selection in linear regression},
  author={Mitchell, Toby J and Beauchamp, John J},
  journal={Journal of the American Statistical Association},
  volume={83},
  number={404},
  pages={1023--1032},
  year={1988},
  publisher={Taylor \& Francis}
}

@article{dudoit2003multiple,
  title={Multiple hypothesis testing in microarray experiments},
  author={Dudoit, Sandrine and Shaffer, Juliet Popper and Boldrick, Jennifer C},
  journal={Statistical Science},
  volume={18},
  number={1},
  pages={71--103},
  year={2003},
  publisher={Institute of Mathematical Statistics}
}

@article{efron2008microarrays,
  title={Microarrays, empirical Bayes and the two-groups model},
  author={Efron, Bradley},
  year={2008}
}

@article{ignatiadis2023empirical,
  title={Empirical partially Bayes multiple testing and compound $\chi^{2}$ decisions},
  author={Ignatiadis, Nikolaos and Sen, Bodhisattva},
  journal={arXiv preprint arXiv:2303.02887},
  year={2023}
}

@phdthesis{tresoldi2021sparsity,
  title={Sparsity},
  author={Tresoldi, Micol Federica},
  year={2021},
  school={The University of Chicago}
}

@book{feller1991introduction,
  title={An introduction to probability theory and its applications, Volume 2},
  author={Feller, William},
  volume={81},
  year={1991},
  publisher={John Wiley \& Sons}
}

@article{carvalho2010horseshoe,
  title={The horseshoe estimator for sparse signals},
  author={Carvalho, Carlos M and Polson, Nicholas G and Scott, James G},
  journal={Biometrika},
  volume={97},
  number={2},
  pages={465--480},
  year={2010},
  publisher={Oxford University Press}
}

\end{document}